\theoremstyle{plain}
\newtheorem{theorem}                 {Theorem}      [section]
\newtheorem{lemma}        [theorem]  {Lemma}
\theoremstyle{definition}
\newtheorem{example}      [theorem]  {Example}
\newtheorem{remark}       [theorem]  {Remark}
\newtheorem{definition}   [theorem]  {Definition}
\numberwithin{equation}{section}
\def \R{{\mathbb R}}
\def \rn{{\mathbb R}}
\def \s{{\mathbb S}}
\def \C{{\mathbb C}}
\def \cn{{\mathbb C}}
\def \hn{{\mathbb H}}
\def \H{{\mathbb H}}
\def \nn{{\mathbb N}}
\def \N{{\mathbb N}}
\def \nab#1#2{\hbox{$\nabla$\kern -.3em\lower 1.0 ex
    \hbox{$#1$}\kern -.1 em {$#2$}}}
\def \SLR#1{\text{\bf SL}_{#1}(\rn)}
\def \SL2{\widetilde{\text{\bf SL}}_{2}(\rn)}
\def \SO#1{\text{\bf SO}(#1)}
\def \SU#1{\text{\bf SU}(#1)}
\def \Sp#1{\text{\bf Sp}(#1)}
\def \Sol{\text{\bf Sol}}
\def \Nil{\text{\bf Nil}}
\DeclareMathOperator{\Div}{div}
\numberwithin{equation}{section}
\begin{document}

\title[Proper $r$-harmonic functions on the Thurston geometries]
{Proper $r$-Harmonic functions on \\  the Thurston geometries}

%\dedicatory{version 1.0045 - 28 October 2019}

\author{Sigmundur Gudmundsson}

\address{Mathematics, Faculty of Science\\ University of Lund\\
Box 118, Lund 221\\
Sweden}
\email{Sigmundur.Gudmundsson@math.lu.se}

\author{Anna Siffert}
\address{Max Planck Institute for Mathematics\\
Vivatsgasse 7\\
53111 Bonn\\
Germany}
\email{siffert@mpim-bonn.mpg.de}

\begin{abstract}
For any positive natural number $r\in\N^+$ we construct new explicit proper $r$-harmonic functions on the celebrated $3$-dimensional Thurston geometries $\Sol$, $\Nil$, $\SL2$, $\H^2\times\rn$ and $\s^2\times\rn$.
\end{abstract}

\subjclass[2010]{53A07, 53C42, 58E20}

\keywords{$r$-Harmonic functions, Thurston geometries}

\maketitle

\section{Introduction}

For a positive natural number $r\in\N^+$, the complex-valued $r$-harmonic functions are solutions to a partial differential equation of order $2r$. This equation arises in various contexts, see for example the extensive analysis in the Lecture Notes of Mathematics volume \cite{Gaz-Gru-Swe}.  The best known applications are  in physics e.g. for $r=2$
in the areas of continuum mechanics, including elasticity theory and the solution of Stokes flows. 
The literature on $1$-harmonic and $2$-harmonic functions is vast, but usually the domains are either surfaces or open subsets of $\rn^n$ equipped with its standard flat Euclidean metric.

\smallskip

Recently, new explicit local $2$-harmonic functions were constructed on the classical compact simple Lie groups $\SU n$, $\SO n$ and $\Sp n$, see \cite{Gud-Mon-Rat-1} and \cite{Gud-Sif-1}.  This gives local solutions to the $2$-harmonic equation on the $3$-dimensional round sphere $\s^3\cong\SU 2$ and the standard hyperbolic space $\H^3$ via a general duality principle, see \cite{Gud-13} and \cite{Gud-Mon-Rat-1}.  This has been developed further in \cite {Gud-15} to construct proper $r$-harmonic functions on the hyperbolic spaces $\hn^n$ and spheres $S^n$ for any positive natural numbers $r,n\in\N^+$

The classical Riemannian manifolds $\rn^3$, $\s^3$ and $\H^3$ of constant curvature are all on Thurston's celebrated list of $3$-dimensional model geometries, see \cite{BW-book}, \cite{Sco} and \cite{Thu}.  In \cite{Gud-14} the investigation of $2$-harmonic functions to the other members on Thurston's list i.e. $\Sol$, $\Nil$, $\SL2$, $\H^2\times\rn$ and $\s^2\times\rn$ has been initiated.  

\smallskip

In this paper we construct new explicit $r$-harmonic functions
on the model geometries $\Sol$, $\Nil$, $\SL2$, $\H^2\times\rn$ and $\s^2\times\rn$ for any $r\in\N^+$. Our constructions are rather elementary. However, even for the special case when $r=2$, we get infinitely many new $2$-harmonic functions on the spaces $\Sol$ and $\Nil$.

\smallskip

Our main focus lies on the model geometries $\Sol$ and $\Nil$.
Both these geometries are described in terms of global coordinates $(x,y,t)$, see Section \ref{section-sol} and Section \ref{section-nil}, respectively.

\smallskip

For any natural number $r\geq 2$, we construct some multivariant polynomials on $\Sol$ which are $r$-harmonic functions.

\begin{theorem}\label{main-1}
For two natural numbers $m,n\in\N$ put $$r=\min(\lfloor m/2\rfloor,\lfloor n/2\rfloor)+2.$$
Then for $0\leq i\leq\lfloor m/2\rfloor$ and $0\leq j\leq\lfloor n/2\rfloor$, there exist complex numbers $c_{i,j}$ such that the function $f_{m,n}:\Sol\rightarrow\C$ given by 
\begin{equation*}
f_{m,n}(x,y,t)=\sum_{i=0}^{\lfloor m/2\rfloor}\sum_{j=0}^{\lfloor n/2\rfloor}c_{i,j}\cdot x^{m-2i}y^{n-2j}e^{2t(j-i)}
\end{equation*}
is proper $r$-harmonic.
\end{theorem}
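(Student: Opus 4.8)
The plan is to work directly from the explicit formula for the Laplace–Beltrami operator on $\Sol$ in the global coordinates $(x,y,t)$. Recall that in these coordinates the metric is $e^{2t}dx^2+e^{-2t}dy^2+dt^2$, so the Laplacian acts on a smooth function $\phi$ by
\begin{equation*}
\tau(\phi)=e^{-2t}\,\partial_x^2\phi+e^{2t}\,\partial_y^2\phi+\partial_t^2\phi.
\end{equation*}
The first key observation is that the set of functions spanned by the monomials $x^{a}y^{b}e^{2t(j-i)}$ is well-behaved under $\tau$: applying $\tau$ to $x^{m-2i}y^{n-2j}e^{2t(j-i)}$ produces a combination of $x^{m-2i-2}y^{n-2j}e^{2t(j-i-1)}$, $x^{m-2i}y^{n-2j-2}e^{2t(j-i+1)}$ and $x^{m-2i}y^{n-2j}e^{2t(j-i)}$ — i.e. the exponents stay inside the index rectangle $0\le i\le\lfloor m/2\rfloor$, $0\le j\le\lfloor n/2\rfloor$ after shifting. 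So I would first prove the closure statement: the complex vector space $V_{m,n}$ spanned by $\{x^{m-2i}y^{n-2j}e^{2t(j-i)}\}_{i,j}$ is $\tau$-invariant, and write down the matrix of $\tau|_{V_{m,n}}$ explicitly with respect to the monomial basis. Note the combination $e^{2t(j-i)}$ is tuned precisely so that the two ``off-diagonal'' contributions from $e^{-2t}\partial_x^2$ and $e^{2t}\partial_y^2$ land on lattice points still in the rectangle.

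Next I would analyze the structure of this matrix. Lowering $x$'s exponent by $2$ shifts $i\mapsto i+1$ and the exponent of $e^{2t}$ by $-2$; lowering $y$'s exponent by $2$ shifts $j\mapsto j+1$ and the exponent by $+2$. Hence $\tau$ is a sum of two commuting nilpotent ``shift'' operators $N_x$ (increment $i$) and $N_y$ (increment $j$) plus a diagonal piece $D$ coming from the $\partial_t^2$ term, with $N_x^{\lfloor m/2\rfloor+1}=N_y^{\lfloor n/2\rfloor+1}=0$. The diagonal entry on the monomial indexed by $(i,j)$ is $4(j-i)^2$. The crucial point is that $\tau^{k}$ applied to any element of $V_{m,n}$, for $k$ large enough, must vanish: each application of $\tau$ either kills the leading term or strictly increases one of $i,j$, and the rectangle is finite. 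Counting carefully, after $r=\min(\lfloor m/2\rfloor,\lfloor n/2\rfloor)+2$ applications one reaches $0$. This gives $r$-harmonicity for \emph{any} choice of the $c_{i,j}$, provided we are content with $\tau^r f_{m,n}=0$.

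The real content — and the main obstacle — is \emph{properness}, i.e. producing $c_{i,j}$ so that $\tau^{r-1}f_{m,n}\neq 0$ while $\tau^{r}f_{m,n}=0$. Here I would argue as follows. Start from the ``corner'' monomial where $i=\lfloor m/2\rfloor$, $j=\lfloor n/2\rfloor$: its $e^{2t}$-exponent is $2(\lfloor n/2\rfloor-\lfloor m/2\rfloor)$ and it is annihilated by both $N_x$ and $N_y$, so $\tau$ acts on it just by the scalar $D=4(\lfloor n/2\rfloor-\lfloor m/2\rfloor)^2$. One then builds $f_{m,n}$ by a downward recursion in the rectangle: choose $c_{i,j}$ to solve the triangular system forcing $\tau^{r} f_{m,n}=0$, starting from a nonzero corner coefficient and working outward. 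Because the off-diagonal operators $N_x,N_y$ are strictly nilpotent and the diagonal part is a known scalar on each lattice point, at each step the equation for the next $c_{i,j}$ is linear with a nonzero (or, where the diagonal contribution vanishes, structurally solvable) coefficient, so the recursion never obstructs. It then remains to check that $\tau^{r-1}f_{m,n}$ is genuinely nonzero; this follows by tracking the single monomial of lowest $x$- and $y$-degree (or, equivalently, by choosing the corner coefficient so that the iterated lower-order terms cannot all cancel) — concretely, the coefficient of $x^{m-2\lfloor m/2\rfloor}y^{n-2\lfloor n/2\rfloor}e^{2t(\lfloor n/2\rfloor-\lfloor m/2\rfloor)}$ in $\tau^{r-1}f_{m,n}$ is a nonzero multiple of the corner coefficient times a product of the $N_x,N_y$ matrix entries, and the parameters $m,n$ are chosen exactly so this product is nonzero. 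Verifying this nonvanishing — i.e. that $r$ is the \emph{exact} order and not smaller — is the delicate bookkeeping step; everything else is the routine computation of $\tau$ on monomials and linear algebra on a finite grid.
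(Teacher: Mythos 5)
Your decomposition $\tau|_{V_{m,n}}=N_x+N_y+D$ is a sensible way to organise the computation, but the step on which everything else rests is false: $\tau$ restricted to $V_{m,n}$ is \emph{not} nilpotent, so it is not true that $\tau^{r}f_{m,n}=0$ ``for any choice of the $c_{i,j}$''. The diagonal part $D$ acts on the basis monomial indexed by $(i,j)$ as multiplication by $4(j-i)^2$, which is a \emph{nonzero} scalar whenever $i\neq j$; such a term neither kills the monomial nor increases $i$ or $j$. Concretely, for $(m,n)=(0,2)$ the basis element $e^{2t}\in V_{0,2}$ satisfies $\tau(e^{2t})=4e^{2t}$, hence $\tau^k(e^{2t})=4^ke^{2t}\neq0$ for every $k$, and a generic element of $V_{0,2}$ is not polyharmonic of any order. (Even discarding $D$, the nilpotency order of $N_x+N_y$ on the monomial $x^my^n$ is $\lfloor m/2\rfloor+\lfloor n/2\rfloor+1$, which generally exceeds $\min(\lfloor m/2\rfloor,\lfloor n/2\rfloor)+2$.) The existence of coefficients $c_{i,j}$ for which $\tau^r f_{m,n}=0$ is therefore the entire content of the theorem, not a freebie, and your proposal does not establish it.

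The same issue undermines the recursion you sketch: you propose to start at the corner $(i,j)=(\lfloor m/2\rfloor,\lfloor n/2\rfloor)$, but that monomial is an eigenvector of $\tau$ with eigenvalue $4(\lfloor n/2\rfloor-\lfloor m/2\rfloor)^2$, generically nonzero, so no nonzero multiple of it alone is polyharmonic and the triangular structure you invoke is not present there. The paper's proof anchors the recursion at the opposite corner $(0,0)$ instead: it fixes the boundary coefficients $c_{i,0}$ and $c_{0,k}$ by the explicit one-variable harmonicity recursions of Lemma 3.4, checks that with these choices $\tau(f_{m,n})$ again has the shape of an $f_{m-2,n-2}$, and then proves a surjectivity statement: for \emph{every} admissible target $f_{m-2,n-2}$ the resulting linear system for the interior $c_{i,k}$ is solvable (the only nontrivial step being the invertibility of a $3\times3$ matrix with determinant $8xy\neq0$). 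Induction on $(m,n)\mapsto(m-2,n-2)$ then yields $\tau^rf_{m,n}=0$ and properness simultaneously, since $\tau^{r-1}f_{m,n}=\tau^{r-2}f_{m-2,n-2}\neq0$ by the induction hypothesis. To salvage your framework you would need to show that the order-$r$ generalized kernel of $\tau|_{V_{m,n}}$ contains an element not in the order-$(r-1)$ generalized kernel; that is exactly the linear system the paper solves, and it is the part your argument leaves unproved.
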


In the case of the model space $\Nil$, our first main result is the following. 

\begin{theorem}\label{main-2}
Let $H_1:\R^2\rightarrow\R$ a proper harmonic map and $d,\alpha\in\N$ be given.
Then the function
$$f_{d,\alpha}(x,y,t)=H_1(x,y)\cdot x^d\cdot t^{\alpha}$$ 
is $(2\alpha+d+1)$-harmonic if $\partial_x^iH_1(x,y)\neq 0$ and $\partial_y^iH_1(x,y)\neq 0$ for all $i\in\N$.
\end{theorem}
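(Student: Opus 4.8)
The plan is to recast the claim as a purely combinatorial statement about the iterated tension field $\tau$ of $\Nil$, acting on an auxiliary space of formal monomials. Recall from Section \ref{section-nil} that in the global coordinates $(x,y,t)$ one has $\tau(f)=f_{xx}+f_{yy}+2x\,f_{yt}+(1+x^2)\,f_{tt}$ (the sign of the mixed term depends on the chosen orientation and is irrelevant below), and that $f$ is $r$-harmonic precisely when $\tau^{\,r}(f)=0$. Introduce the $\C$-vector space $V$ with basis the formal symbols $M_{p,q,c,e}$, $p,q,c,e\in\N$, and the evaluation map $\varepsilon\colon V\to C^\infty(\Nil)$, $M_{p,q,c,e}\mapsto \partial_x^p\partial_y^qH_1\cdot x^ct^e$. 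A straightforward computation, in which the harmonicity of $H_1$ enters only through $\Delta_{\R^2}H_1=0$ to cancel the contribution of $\partial_x^p\partial_y^q(\Delta_{\R^2}H_1)\cdot x^ct^e$, gives
\begin{align*}
\tau\bigl(\partial_x^p\partial_y^qH_1\cdot x^ct^e\bigr)
&=2c\,\partial_x^{p+1}\partial_y^qH_1\cdot x^{c-1}t^e+c(c-1)\,\partial_x^p\partial_y^qH_1\cdot x^{c-2}t^e\\
&\quad+2e\,\partial_x^p\partial_y^{q+1}H_1\cdot x^{c+1}t^{e-1}+e(e-1)\,\partial_x^p\partial_y^qH_1\cdot(x^c+x^{c+2})\,t^{e-2}.
\end{align*}
Hence $\tau$ is intertwined with a well-defined linear endomorphism $T$ of $V$ via $\varepsilon\circ T=\tau\circ\varepsilon$, and since $f_{d,\alpha}=\varepsilon(M_{0,0,d,\alpha})$ it is enough to analyse $T^{\,k}(M_{0,0,d,\alpha})$ inside $V$.

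The key observation is the behaviour of the weight $w(M_{p,q,c,e}):=2e+c$. The five monomials produced by $T$ from $M_{p,q,c,e}$ correspond to the moves $(p{,}q{,}c{,}e)\mapsto(p{+}1,q,c{-}1,e),\ (p,q,c{-}2,e),\ (p,q{+}1,c{+}1,e{-}1),\ (p,q,c,e{-}2),\ (p,q,c{+}2,e{-}2)$, which drop $w$ by $1,2,1,4,2$ respectively; in particular every nonzero monomial in $T(M)$ has weight \emph{strictly} smaller than $w(M)$. Since $w\ge 0$ on all basis vectors, $T^{\,N+1}$ annihilates everything supported in weights $\le N$. As $w(M_{0,0,d,\alpha})=2\alpha+d$, this yields $T^{\,2\alpha+d+1}(M_{0,0,d,\alpha})=0$, hence $\tau^{\,2\alpha+d+1}(f_{d,\alpha})=0$, so $f_{d,\alpha}$ is $(2\alpha+d+1)$-harmonic. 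The step I expect to be delicate is precisely the choice of weight: the naive candidate $2e+c+p$ is only weakly decreasing (the first move fixes it) and, worse, is ill defined once the symbols $\partial_x^p\partial_y^qH_1$ are identified via harmonicity, so the resolution is to work in the free space $V$, spend the relation $\Delta_{\R^2}H_1=0$ exactly once, and then check that the coarser weight $w=2e+c$, which ignores $p$ entirely, is nevertheless strictly decreased by every term.

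It remains to see that the function is \emph{proper}, i.e.\ $\tau^{\,2\alpha+d}(f_{d,\alpha})\not\equiv 0$. By the weight estimate, $T^{\,2\alpha+d}(M_{0,0,d,\alpha})$ is supported in weight $0$, hence is a combination of symbols $M_{p,q,0,0}$; expanding it as a sum over length-$(2\alpha+d)$ words in the five moves above, a word contributes only if its total weight drop equals $2\alpha+d$, and since each move drops the weight by at least $1$, every move in such a word must drop it by exactly $1$. Thus only the first move $(p{+}1,q,c{-}1,e)$ and the third move $(p,q{+}1,c{+}1,e{-}1)$ occur; exactly $\alpha$ of the third are forced (to bring $e$ to $0$) and then exactly $d+\alpha$ of the first (to bring $c$ to $0$), so the target symbol is $M_{d+\alpha,\alpha,0,0}$ irrespective of the ordering, and its coefficient $\Lambda$ is a sum over the admissible orderings of products of the positive factors $2c,2e$; as at least one ordering is admissible (perform the three-moves first), $\Lambda>0$. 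Therefore $\tau^{\,2\alpha+d}(f_{d,\alpha})=\Lambda\,\partial_x^{d+\alpha}\partial_y^\alpha H_1$.

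Finally this last function is not identically zero. Reducing $\partial_x^{d+\alpha}\partial_y^\alpha H_1$ by repeatedly substituting $\partial_y^2H_1=-\partial_x^2H_1$, or $\partial_x^2H_1=-\partial_y^2H_1$, brings it, up to sign, to one of $\partial_x^{d+2\alpha}H_1$, $\partial_y^{d+2\alpha}H_1$, $\partial_x^{d+2\alpha-1}\partial_yH_1$, $\partial_x\partial_y^{d+2\alpha-1}H_1$ according to the parities of $d$ and $\alpha$. The first two are nonzero by hypothesis; for the mixed ones one uses the elementary remark that a harmonic function depending on a single variable is affine — so if, say, $\partial_y\bigl(\partial_x^{d+2\alpha-1}H_1\bigr)\equiv 0$ then the harmonic function $\partial_x^{d+2\alpha-1}H_1$ would be affine in $x$, forcing $\partial_x^{d+2\alpha+1}H_1\equiv 0$, contrary to the assumption that $\partial_x^iH_1\neq 0$ and $\partial_y^iH_1\neq 0$ for all $i$. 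Hence $f_{d,\alpha}$ is proper $(2\alpha+d+1)$-harmonic; the remaining bookkeeping after the weight lemma is entirely routine.
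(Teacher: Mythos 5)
Your argument is correct, and it takes a genuinely different route from the paper. The paper proves the theorem by a nested induction: first Lemma \ref{prep} handles $\alpha=0$ by induction on $d$ (splitting $\tau(Hx^{d})$ into two terms of lower polyharmonic order), and then the induction on $\alpha$ proceeds by sorting the output of $\tau$ into terms carrying $t^{\alpha_0+1}$ versus lower powers of $t$, repeatedly peeling off the ``remaining term'' $\tau^{i}(Hx^{d})t^{\alpha_0+1}$ with the help of the identity $\tau^{n}(Hx^{d})=\sum_{j}c_{j}(d)\,\partial_x^{j}H\cdot x^{d-2n+j}$. You instead linearize the whole problem at once: lifting $\tau$ to the operator $T$ on the free module spanned by the symbols $\partial_x^{p}\partial_y^{q}H_1\cdot x^{c}t^{e}$, spending harmonicity exactly once to kill $\partial_x^p\partial_y^q(\Delta H_1)$, and observing that the weight $w=2e+c$ drops by at least $1$ under every surviving move. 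This single monotonicity statement replaces all of the paper's Type I/Type II bookkeeping and yields $\tau^{2\alpha+d+1}(f_{d,\alpha})=0$ immediately; your identification of the extremal (drop-exactly-one) words then gives the sharper explicit formula $\tau^{2\alpha+d}(f_{d,\alpha})=\Lambda\,\partial_x^{d+\alpha}\partial_y^{\alpha}H_1$ with $\Lambda>0$, which the paper never writes down, and reduces properness to the non-vanishing of a single mixed derivative, handled cleanly via $\partial_y^2H_1=-\partial_x^2H_1$ and the remark that a harmonic function of one variable is affine. Your observation that the naive weight $2e+c+p$ is only weakly monotone, and that one must work in the free space before imposing the harmonicity relation, is exactly the right subtlety to flag. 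Two cosmetic remarks: only the parity of $\alpha$ (not of $d$) is needed in the final reduction, so two of your four reduced forms are redundant; and your proof in fact only uses the hypothesis $\partial_x^{i}H_1\neq0$, so it is slightly stronger than the stated theorem.
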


Further, we prove that the product of any monomials in $x$, $y$ and $t$ is $r$-harmonic for some $r$ specified below.

\begin{theorem}\label{main-3}
Let $H_1:\R^2\rightarrow\R$ a proper harmonic map and $\alpha\in\N$ be given.
Then the function
$$f_{m,n,\alpha}(x,y,t)=x^m\cdot y^{n}\cdot t^{\alpha}$$ 
\begin{enumerate}
\item is $(\lfloor (m+\alpha)/2\rfloor+\lfloor n/2\rfloor+1+\lfloor \alpha/2\rfloor)$-harmonic if $\alpha$ is even,
\item is $(\lfloor (m+\alpha)/2\rfloor+\lfloor (n+1)/2\rfloor+1+\lfloor \alpha/2\rfloor)$-harmonic if $\alpha$ is odd.
\end{enumerate}
\end{theorem}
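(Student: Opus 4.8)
The plan is to argue directly from the Laplace--Beltrami operator of $\Nil$ recorded in Section~\ref{section-nil}; in the global coordinates $(x,y,t)$ it is of the form
$$\Delta=\partial_x^2+\partial_y^2+2x\,\partial_y\partial_t+(1+x^2)\,\partial_t^2 .$$
Applying it to a monomial one obtains an identity of the shape
$$\Delta\bigl(x^ay^bt^c\bigr)=a(a-1)\,x^{a-2}y^bt^c+b(b-1)\,x^ay^{b-2}t^c+2bc\,x^{a+1}y^{b-1}t^{c-1}+c(c-1)\bigl(x^a+x^{a+2}\bigr)y^bt^{c-2},$$
and the only features of this identity I intend to use are that each monomial $x^{a'}y^{b'}t^{c'}$ occurring on the right satisfies (i) $a'+b'+2c'\le a+b+2c-2$, and (ii) $a'+b'\equiv a+b$ and $b'+c'\equiv b+c$ modulo $2$. (The parity congruences reflect that the only summand of $\Delta$ which alters the parities of $a,b,c$ is the one coming from $x\,\partial_y\partial_t$, and it flips all three simultaneously.)

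Granting this, the remaining argument is purely combinatorial. Call $w\bigl(x^ay^bt^c\bigr)=a+b+2c$ the \emph{weight}. Iterating (i) and (ii), $\Delta^r\bigl(x^my^nt^\alpha\bigr)$ is a finite $\C$--linear combination of monomials $x^{a'}y^{b'}t^{c'}$ of weight at most $m+n+2\alpha-2r$ and with $a'+b'\equiv m+n$, $b'+c'\equiv n+\alpha$ modulo $2$. Let $\mu=\mu(m,n,\alpha)$ denote the least weight of any monomial in $x,y,t$ satisfying those two parity congruences; a one--line check gives $\mu=0$ when $m+n$ and $n+\alpha$ are both even, $\mu=1$ when $m+n$ is odd, and $\mu=2$ when $m+n$ is even but $n+\alpha$ is odd. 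Consequently $\Delta^r\bigl(x^my^nt^\alpha\bigr)=0$ as soon as $m+n+2\alpha-2r<\mu$, i.e.\ as soon as $r\ge\tfrac12(m+n+2\alpha-\mu)+1$ (the integer $m+n+2\alpha-\mu$ being even).

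It therefore only remains to verify that the number $r$ displayed in the statement coincides with $\tfrac12(m+n+2\alpha-\mu)+1$: for $\alpha$ even one must check $\lfloor(m+\alpha)/2\rfloor+\lfloor n/2\rfloor+1+\lfloor\alpha/2\rfloor=\tfrac12(m+n+2\alpha-\mu)+1$, and for $\alpha$ odd the same identity with $\lfloor(n+1)/2\rfloor$ in place of $\lfloor n/2\rfloor$. This is a routine, if slightly fiddly, manipulation, done by splitting into the cases given by the parities of $m$, $n$ and $\alpha$ and substituting $\lfloor k/2\rfloor=k/2$ or $(k-1)/2$ accordingly; in each case the floor expression telescopes to $\tfrac12(m+n+2\alpha-\mu)+1$.

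The conceptual content is thus entirely in observation (ii): it is precisely the parity congruence $b'+c'\equiv b+c$ that makes $\mu$ jump to $2$ (hence the claimed $r$ strictly smaller than the crude weight bound $\lfloor(m+n+2\alpha)/2\rfloor+1$) when $m+n$ is even and $n+\alpha$ is odd. The bookkeeping in the last paragraph is the only part requiring care, but it is elementary. Finally, I note that the statement asserts only that $f_{m,n,\alpha}$ is $r$--harmonic, not \emph{proper} $r$--harmonic — consistently, since e.g.\ the case $m=1$, $n=0$, $\alpha=1$ gives $f=xt$, which is already harmonic although the formula returns $r=2$ — so no lower bound on the order, and in particular no non--cancellation argument, is needed.
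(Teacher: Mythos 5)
Your argument is correct, and it takes a genuinely different route from the paper. The paper proves Theorem \ref{main-3} by induction on $\alpha$: it expands $\tau(x^my^nt^{\alpha_0+1})$ via (\ref{tension-Nil}) into four terms, invokes the induction hypothesis on the two terms with lower $t$-power, and peels off the terms $x^{m-2}y^nt^{\alpha_0+1}$ and $x^my^{n-2}t^{\alpha_0+1}$ recursively until only lower $t$-powers remain — a bookkeeping-heavy argument whose individual degree claims are only sketched. You instead observe that every monomial produced by one application of $\tau$ from $x^ay^bt^c$ has weight $a'+b'+2c'\le a+b+2c-2$ and preserves the parities of $a'+b'$ and $b'+c'$, so that after $r$ iterations nothing of weight below the minimal admissible weight $\mu\in\{0,1,2\}$ can survive; I checked that your values of $\mu$ are right, that the one exceptional term $x^{a+2}y^bt^{c-2}$ from $(1+x^2)\partial_t^2$ indeed respects both invariants, and that $\tfrac12(m+n+2\alpha-\mu)+1$ agrees with the stated $r$ in all eight parity cases of $(m,n,\alpha)$. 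Your approach is shorter, avoids the induction on $\alpha$ entirely, and makes transparent why the odd-$\alpha$ formula differs from the even one (the $b'+c'$ parity forces $\mu=2$ exactly when $m+n$ is even and $n+\alpha$ is odd). What it does not give — and what the paper's inductive scheme at least gestures toward via its examples — is any information about \emph{properness}, i.e.\ a lower bound showing $\tau^{r-1}f\neq 0$; your remark that the theorem does not claim properness is accurate (and your example $f=xt$ shows the stated $r$ is not always sharp), so this is not a gap in your proof of the statement as written.
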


\smallskip

Throughout this paper, the results are formulated such that the solutions are globally defined, but clearly, the same constructions hold even locally.  This is particularly important for the holomorphic functions in use.

\vspace{0.5cm}

\noindent\textbf{Organisation.}
In Section\,\ref{section-r-harmonic} we provide some preliminaries. Section\,\ref{section-sol} and Section\,\ref{section-nil} are the main sections containing the proofs of Theorems\,\ref{main-1} and \ref{main-2}, respectively.  We supplement these results by Sections \ref{section-sl2} and \ref{section-product}, in which we provide an elementary construction of $r$-harmonic maps on $\SL2$  and the products $\H^2\times\rn$ and $\s^2\times\rn$, respectively. 

\vspace{0.5cm}

\noindent\textbf{Acknowledgments.}
The second author is grateful to the Max Planck Institute for Mathematics at Bonn for its hospitality and financial support.

\section{Proper $r$-harmonic functions}\label{section-r-harmonic}
Throughout this work, let $(M,g)$ be a smooth $m$-dimensional manifold equipped with a Riemannian metric $g$.  We complexify its tangent bundle $TM$ of $M$ to obtain $T^{\cn}M$ and extend the metric $g$ to a complex-bilinear form on $T^{\cn}M$.  Then the gradient $\nabla f$ of a complex-valued function $f:(M,g)\to\cn$ is a section of $T^{\cn}M$.  In this situation, the well-known {\it linear} Laplace-Beltrami operator $\tau$ on $(M,g)$ acts locally on $f$ as follows
$$
\tau(f)=\Div (\nabla f)=\sum_{i,j=1}^m\frac{1}{\sqrt{|g|}} \frac{\partial}{\partial x_j}
\left(g^{ij}\, \sqrt{|g|}\, \frac{\partial f}{\partial x_i}\right).
$$
For two complex-valued functions $f,h:(M,g)\to\cn$ we have the following well-known relation
\begin{equation*}
\tau(f\cdot h)=\tau(f)\cdot h+2\cdot\kappa(f,h)+f\cdot\tau(h),
\end{equation*}
where the {\it conformality} operator $\kappa$ is given by $\kappa(f,h)=g(\nabla f,\nabla h)$.  Locally this acts by
\begin{equation}\label{equation-products}
\kappa(f,h)=\sum_{i,j=1}^mg^{ij}\frac{\partial f}{\partial x_i}\frac{\partial h}{\partial x_j}.
\end{equation}
Below we will repeatedly use this formula, often without explicitly mentioning this.

\smallskip

With this preparation we can now recall the definition of the central objects that we are dealing with, namely the complex-valued $r$-harmonic functions.

\begin{definition}\label{definition-proper-r-harmonic}
For a natural number $r\in\nn$, the iterated Laplace-Beltrami operator $\tau^r$ is given by
$$\tau^{0} (f)=f\ \ \text{and}\ \ \tau^r (f)=\tau(\tau^{(r-1)}(f)).$$
A complex-valued function $f:(M,g)\to\cn$ is said to be
\begin{enumerate}
\item[(a)] {\it $r$-harmonic} if $\tau^r (f)=0$, and
\item[(b)] {\it proper $r$-harmonic} if $\tau^r (f)=0$ and $\tau^{(r-1)} (f)$ does not vanish identically.
\end{enumerate}
\end{definition}

Clearly, since any $r$-harmonic function is of course $n$-harmonic for all $n\geq r$, one is interested in the construction of proper $r$-harmonic maps.
Further, it should be noted that the {\it harmonic} functions are exactly $r$-harmonic for $r=1$
and the {\it biharmonic} functions are the $2$-harmonic ones.
In some texts, the $r$-harmonic functions are also called {\it polyharmonic} of order $r$.

\section{The model geometry $\Sol$}\label{section-sol}

The goal of this section is to provide explicit proper $r$-harmonic functions on the Thurston geometry $\Sol$ for any $r\in\N^+$. In the literature this problem has so far only been considered for $r=2$, see \cite{Gud-14}.
\vskip .2cm

$\Sol$ is a $3$-dimensional Riemannian homogeneous space modeled on the $3$-dimensional solvable Lie subgroup
$$
\Sol=\{
\begin{bmatrix}
e^t & 0 & x\\
0 & e^{-t} & y\\
0 & 0 & 1
\end{bmatrix}|\ x,y,t\in\rn\}
$$
of the special linear group $\SLR 3$.  The left-invariant Riemannian metric on $\Sol$ is determined by the orthonormal basis
$\{X,Y,T\}$ of its Lie algebra given by
$$
X=
\begin{bmatrix}
0 & 0 & 1\\
0 & 0 & 0\\
0 & 0 & 0
\end{bmatrix},\ \
Y=
\begin{bmatrix}
0 & 0 & 0\\
0 & 0 & 1\\
0 & 0 & 0
\end{bmatrix},\ \
T=
\begin{bmatrix}
1 &  0 & 0\\
0 & -1 & 0\\
0 &  0 & 0
\end{bmatrix}.
$$
In the global coordinates $(x,y,t)$ on $\Sol$ this takes the following well-known form
$$
ds^2=e^{2t}dx^2+e^{-2t}dy^2+dt^2.
$$
It is easily seen that the corresponding Laplace-Beltrami operator $\tau$ and the
conformality operator $\kappa$ satisfy
\begin{equation}\label{tension-Sol}
\tau(f)=e^{-2t}\frac {\partial ^2 f}{\partial x^2}
+e^{2t}\frac {\partial ^2 f}{\partial y^2}
+\frac {\partial ^2 f}{\partial t^2},
\end{equation}
and
$$
\kappa(f,h)=e^{-2t}\frac {\partial f}{\partial x}\frac {\partial h}{\partial x}
+e^{2t}\frac {\partial f}{\partial y}\frac {\partial h}{\partial y}
+\frac {\partial f}{\partial t}\frac {\partial h}{\partial t},
$$
respectively.
\vskip .2 cm

In his Examples 3.1 and 3.2 of \cite{Gud-14}, Gudmundsson constructs the first globally defined complex-valued proper $r$-harmonic functions on the model geometry $\Sol$.  They follow below.

\begin{example}\label{exam:Sol-1-1}
For non-zero elements $a,b\in\cn^{4}$ let the complex-valued functions
$f_1,f_2:\Sol\to\cn$ be defined by
$$f_1(x,y)=(a_1+a_2x+a_3y+a_4xy)$$
and
$$f_2(x,y)=(b_1+b_2x+b_3y+b_4xy).$$
Then, for any $r\in\N$, the function $F_r:\Sol\to\cn$ given by
$$F_r(x,y,t)=t^{2r}\cdot f_1(x,y)+t^{2r+1}\cdot f_2(x,y)$$
is proper $(r+1)$-harmonic.  The reader should note that there is a typo in Example 3.1 of \cite{Gud-14}, where $F_r$ was claimed to be proper $r$-harmonic.
\end{example}

\begin{example}\label{exam:Sol-1-2}
Let $H=h_2\cdot h_3:\Sol\to\cn$ be the product of the functions
$h_2,h_3:\Sol\to\cn$ with
$$h_{2}(x,y,t)=a_2(2x^2-e^{-2t})+a_3(2x^3-3xe^{-2t})$$
and
$$h_{3}(x,y,t)=b_2(2y^2-e^{2t})+b_3(2y^3-3ye^{2t}).$$
Then it is easily shown that $H$ is proper biharmonic.
Thus $H$ constitutes a complex 4-dimensional family of proper
biharmonic functions globally defined on the model space $\Sol$.
\end{example}

What follows should be seen as a generalisation of the results of \cite{Gud-14}. For each positive natural number $r\geq 2$ we construct infinite families of proper $r$-harmonic functions.  We achieve this goal by proving that for every $m,n\in\N^+$ there exist functions $f_{m,n}$ of the form
$$x^my^n+\mbox{\lq lower order terms\rq}$$
which are proper $(\min(\lfloor m/2\rfloor,\lfloor n/2\rfloor)+2)$-harmonic.
We start by giving a few examples.

\begin{example}
By straightforward computations one verifies that
the functions $f_{2,4},f_{2,5}:\Sol\to\cn$ defined by
$$f_{2,4}(x,y,t)=x^2y^4+\tfrac{3}{8}e^{4t}x^2-\tfrac{1}{2}e^{-2t}y^4+(\tfrac{21}{16}-3x^2y^2)e^{2t}$$
and
$$f_{2,5}(x,y,t)=x^2y^5+\tfrac{15}{8}e^{4t}x^2y-\tfrac{1}{2}e^{-2t}y^5+(\tfrac{105}{16}y-5x^2y^3)e^{2t}$$
are proper biharmonic.

\smallskip

Further, one verifies that the functions $f_{4,4},f_{5,4}:\Sol\to\cn$ defined by
\begin{equation*}
\begin{aligned}
f_{4,4}(x,y,t)=x^4y^4-3&e^{2t}x^4y^2-3e^{-2t}x^2y^4\\&+\frac{3}{8}e^{4t}x^4+\frac{63}{8}e^{2t}x^2+\frac{3}{8}e^{-4t}y^4+\frac{63}{8}e^{-2t}y^2
\end{aligned}
\end{equation*}
and
\begin{equation*}
\begin{aligned}
f_{5,4}(x,y,t)=
x^5y^4-&3e^{2t}x^5y^2-5e^{-2t}x^3y^4\\&+\frac{3}{8}e^{4t}x^5+
\frac{105}{8}e^{2t}x^3+\frac{15}{8}e^{-4t}xy^4+
\frac{315}{8}e^{-2t}xy^2
\end{aligned}
\end{equation*}
are proper $3$-harmonic.
\end{example}

We now turn to the proof of Theorem\,\ref{main-1}. Our strategy is to prove this result by induction.  To settle the initial step, with the following lemma, we provide Theorem \ref{main-1} for $m=0$ and $m=1$.

\begin{lemma}\label{m01}
Let $n\in\N$. Then for $0\leq k\leq\lfloor n/2\rfloor$ there exist 
\begin{enumerate}
\item
complex numbers $c_{0,k}$ such that the function $f_{0,n}:\Sol\rightarrow\C$ given by 
\begin{equation*}
f_{0,n}(x,y,t)=\sum_{k=0}^{\lfloor n/2\rfloor}c_{0,k}\cdot y^{n-2k}e^{2tk}
\end{equation*}
is proper harmonic;  
\item
complex numbers $d_{0,k}$ such that the function $f_{1,n}:\Sol\rightarrow\C$ given by 
\begin{equation*}
f_{1,n}(x,y,t)=\sum_{k=0}^{\lfloor n/2\rfloor}d_{0,k}\cdot xy^{n-2k}e^{2tk}
\end{equation*}
is proper harmonic.
\end{enumerate}
\end{lemma}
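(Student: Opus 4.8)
The plan is to verify directly that functions of the stated form are harmonic, reducing the PDE $\tau(f)=0$ to a linear recursion among the coefficients, and then to argue properness separately.

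First I would treat part (1). Writing $f_{0,n}=\sum_{k=0}^{\lfloor n/2\rfloor}c_{0,k}\,y^{n-2k}e^{2tk}$ and applying the formula \eqref{tension-Sol}, the first term $e^{-2t}\partial_x^2 f$ vanishes since $f_{0,n}$ is independent of $x$. For the remaining two terms, note that $\partial_y^2\big(y^{n-2k}\big)=(n-2k)(n-2k-1)\,y^{n-2k-2}$ and $\partial_t^2\big(e^{2tk}\big)=4k^2 e^{2tk}$, so that
\begin{equation*}
\tau(f_{0,n})=\sum_{k=0}^{\lfloor n/2\rfloor}c_{0,k}\Big[(n-2k)(n-2k-1)\,y^{n-2k-2}e^{2t(k+1)}+4k^2\,y^{n-2k}e^{2tk}\Big].
\end{equation*}
Collecting the coefficient of the monomial $y^{n-2k}e^{2tk}$ (which for the first sum comes from index $k-1$ and for the second from index $k$) yields the condition
\begin{equation*}
c_{0,k-1}\,(n-2k+2)(n-2k+1)+4k^2\,c_{0,k}=0,\qquad 1\le k\le\lfloor n/2\rfloor.
\end{equation*}
Hence, starting from $c_{0,0}=1$, the remaining coefficients are determined uniquely and recursively by
\begin{equation*}
c_{0,k}=-\frac{(n-2k+2)(n-2k+1)}{4k^2}\,c_{0,k-1},
\end{equation*}
and with this choice $\tau(f_{0,n})=0$, i.e. $f_{0,n}$ is harmonic. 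Properness here just means $f_{0,n}\not\equiv 0$, which is immediate since $c_{0,0}=1$ and the monomials $y^{n-2k}e^{2tk}$ are linearly independent.

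For part (2), the same computation applies almost verbatim: since $\partial_x^2(x)=0$, the extra factor of $x$ is inert under $\tau$, so $\tau(f_{1,n})=x\cdot\tau(f_{0,n})$ with $f_{0,n}$ built from the coefficients $d_{0,k}$; thus the identical recursion $d_{0,k}=-\frac{(n-2k+2)(n-2k+1)}{4k^2}d_{0,k-1}$ with $d_{0,0}=1$ makes $f_{1,n}$ harmonic, and again $f_{1,n}\not\equiv 0$. The only subtlety worth a remark is the edge case $n-2k+1=0$, i.e. $n$ odd and $k=(n+1)/2$, which cannot occur in the range $1\le k\le\lfloor n/2\rfloor$, so no coefficient is ever forced to vanish prematurely; this is the one place to be slightly careful, but it is not a genuine obstacle. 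Since the statement only requires \emph{harmonicity} and non-vanishing (not a higher proper order), there is no real difficulty — this lemma is purely the base case feeding the induction for Theorem \ref{main-1}.
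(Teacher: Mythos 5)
Your proof is correct and follows essentially the same route as the paper: reduce $\tau(f_{0,n})=0$ to the two-term recursion $4k^2c_{0,k}=-(n-2k+2)(n-2k+1)c_{0,k-1}$ (whose iterated solution is exactly the paper's closed form $c_{0,k}=\tfrac{(-1)^k}{4^k(k!)^2}\tfrac{n!}{(n-2k)!}\,c_{0,0}$), and deduce part (2) from the fact that the factor $x$ is inert under $\tau$. The only point you leave implicit is that the boundary coefficient of $y^{n-2k}e^{2tk}$ at $k=\lfloor n/2\rfloor+1$ vanishes automatically because $(n-2\lfloor n/2\rfloor)(n-2\lfloor n/2\rfloor-1)=0$; the paper's proof leaves the same one-line check implicit.
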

\begin{proof}
Below we use the convention $c_{0,k}=0$ if the index $k$ is negative or larger than $\lfloor n/2\rfloor$.  We start by proving part (1). By a simple calculation we have
\begin{equation*}
\begin{aligned}
\tau(f_{0,n})=\sum_{k=0}^{\lfloor n/2\rfloor+1}
((n+2-2k)(n+1-2k)c_{0,k-1}+4k^2c_{0,k})\cdot y^{n-2k}e^{2kt}.
\end{aligned}
\end{equation*}
Hence $f_{0,n}$ is harmonic if and only if
\begin{equation*}
\begin{aligned}
c_{0,k}=\tfrac{(-1)^k}{4^k(k!)^2}\tfrac{n!}{(n-2k)!}\cdot c_{0,0}
\end{aligned}
\end{equation*}
for $k\in\{1,\dots,\lfloor n/2\rfloor\}.$
Consequently, for this choice of the complex numbers $c_{0,k}$ and $c_{0,0}\neq 0$, the function $f_{0,n}$ is proper harmonic.

\smallskip

Since
$$\tau(f_{1,n})=x\cdot\tau(\sum_{k=0}^{\lfloor n/2\rfloor}d_{0,k}\cdot y^{n-2k}e^{2tk}),$$
and the argument of $\tau$ has the same form as $f_{0,n}$,
part (2) follows immediately from part (1). 
\end{proof}

\begin{remark}
Clearly, due to identity (\ref{tension-Sol}), a result analogous to Lemma \ref{m01} with the roles of $x$ and $y$ reversed  and $t$ substituted by $-t$ holds.  This means that for $m\in\N$ and $0\leq i\leq\lfloor m/2\rfloor$, there exist complex numbers $c_{i,0}$,  such that the function $f_{m,0}:\Sol\rightarrow\C$ given by 
\begin{equation*}
f_{m,0}(x,y,t)=\sum_{i=0}^{\lfloor m/2\rfloor}c_{i,0}\cdot x^{m-2i}e^{-2ti}
\end{equation*}
is proper harmonic.  Furthermore for $0\leq i\leq\lfloor m/2\rfloor$, there exist
complex numbers $d_{i,0}$, such that the function $f_{m,1}:\Sol\rightarrow\C$ given by 
\begin{equation*}
f_{m,1}(x,y,t)=\sum_{i=0}^{\lfloor m/2\rfloor}d_{i,0}\cdot x^{m-2i}ye^{-2ti}
\end{equation*}
is proper harmonic.
\end{remark}

With this preparation at hand we can now prove our first main result, Theorem\,\ref{main-1}, by induction.
\
\begin{proof}[Proof of Theorem \ref{main-1}]
We prove the claim by induction.  Below we assume without loss of generality that $m\leq n$. Note that we have already proven the claim for $(m,n)=(0,n)$ and $(m,n)=(1,n)$, see Lemma \ref{m01}. In order to carry out the induction step, we proceed in two steps as follows.
\begin{enumerate}
\item[(1.)]
 First we show that for $0\leq i\leq\lfloor m/2\rfloor$ and $0\leq k\leq {\lfloor n/2\rfloor}$ there exit coefficients $c_{i,0}$ and $c_{0,k}$ such that $\tau(f_{m,n})$ is of the form 
\begin{equation}\label{induction-step}
f_{m-2,n-2}=\sum_{i=0}^{\lfloor\frac{m-2}{2}\rfloor}\sum_{k=0}^{\lfloor\frac{n-2}{2}\rfloor}\hat{c}_{i,k}\cdot x^{m-2-2i}y^{n-2-2k}e^{2t(k-i)}
\end{equation}
for some $\hat{c}_{i,k}\in\C$. 
\item[(2.)]
In a second step we prove that for each choice of $\hat{c}_{i,k}$ there exist $c_{i,k}$ such that $\tau(f_{m,n})=f_{m-2,n-2}$.
This allows us to apply the induction assumption and thus to establish the claim.
\end{enumerate}

\smallskip

To accomplish (1.), consider the function $f_{m,n}$ with
\begin{equation}\label{koe-1}
c_{i,0}=\tfrac{(-1)^i}{\Pi_{j=1}^i(2j)^2}\tfrac{m!}{(m-2i)!}\cdot c_{0,0}\quad\mbox{for}\quad i\in\{1,\dots,{\lfloor m/2\rfloor}\}
\end{equation}
and
\begin{equation}\label{koe-2}
c_{0,k}=\tfrac{(-1)^k}{\Pi_{j=1}^k(2j)^2}\tfrac{n!}{(n-2k)!}\cdot c_{0,0}\quad\mbox{for}\quad k\in\{1,\dots,\lfloor n/2\rfloor\}.
\end{equation}
By a straightforward computation we yield
\begin{equation}\label{delta-fn}
\tau (f_{m,n})(x,y,t)=\sum_{i=0}^{\lfloor\frac{m}{2}\rfloor}\sum_{k=0}^{\lfloor\frac{n}{2}\rfloor}\tilde{c}_{i,k}\cdot x^{m-2i}y^{n-2k}e^{2t(k-i)}
\end{equation}
with
\begin{eqnarray*}
\tilde{c}_{i,k}&=&4(k-i)^2c_{i,k}+(n+2-2k)(n+1-2k)c_{i,k-1}\\
& &\qquad +(m+2-2i)(m+1-2i)c_{i-1,k}.
\end{eqnarray*}
Clearly, $\tilde{c}_{0,0}=0$. Further, by a simple computation using 
(\ref{koe-1}) and (\ref{koe-2}), we get $\tilde{c}_{0,k}=0$ for $k\in\{1,\dots,\lfloor n/2\rfloor\}$
and $\tilde{c}_{i,0}=0$ for $i\in\{1,\dots, \lfloor m/2\rfloor\}$.
In other words, $\tau (f_{m,n})(x,y,t)$ is of the desired form (\ref{induction-step})
where $\hat{c}_{i,k}=\tilde{c}_{i+1,k+1}$, i.e.
\begin{equation}\label{c-hat}
\begin{aligned}
\hat{c}_{i,k}=4(k-i)^2c_{i+1,k+1}+(n-2k)&(n-1-2k)c_{i+1,k}\\+&(m-2i)(m-1-2i)c_{i,k+1}.
\end{aligned}
\end{equation}
This finishes the first step of the proof.

\smallskip

Next we show (2.), i.e. we show that for any choice of $\hat{c}_{i,k}$ there exist $c_{i,k}$ such that $\tau (f_{m,n})=f_{m-2,n-2}.$ In other words, we want to prove that the $c_{i,k}$ can be expressed in terms of (linear combinations of) the $\hat{c}_{i,k}$.
Note that the indices of $\hat{c}_{i,k}$ are in the range $0\leq i\leq\lfloor m/2\rfloor-1$, $0\leq k\leq\lfloor n/2\rfloor-1$. Further, the range of the indices of $c_{i,k}$ are $0\leq i\leq\lfloor m/2\rfloor$, $0\leq k\leq\lfloor n/2\rfloor$, where those with either $i=0$ or $k=0$ are determined in terms of $c_{0,0}$ -- see equations (\ref{koe-1}) and (\ref{koe-2}).
Given $\hat{c}_{i_0,k_0}$ with $j_0=\max(i_0,k_0)$, we call all $\hat{c}_{i,k}$ with $i,j< j_0$ \textit{lower order terms (with respect to $\hat{c}_{i_0,k_0}$)}.

\smallskip

Although the proof is elementary, it is helpful to consider one example first.
 For this purpose let $m,n\in\N^+$ be such that $\lfloor m/2\rfloor=3$ and $\lfloor n/2\rfloor=5$.
In Figure\,1 we indicate the $c_{i,k}$, $0\leq i,k\leq 4$, by nodes (in different colors).
The equation (\ref{c-hat}) with $i=k=0$ determines $c_{0,0}$ in terms of $\hat{c}_{0,0}$.
Thus, by the identities (\ref{koe-1}) and (\ref{koe-2}), the nodes in yellow in Figure\,1 are all given in terms of $\hat{c}_{0,0}$.
Next, we consider the three equations (\ref{c-hat}) with $(i,k)=(1,0),(1,1),(0,1)$. They determine the nodes in red in terms of $\hat{c}_{0,0}$,  $\hat{c}_{1,0}$,  $\hat{c}_{0,1}$ and  $\hat{c}_{1,1}$.
We can now proceed analogously, i.e. the five equations (\ref{c-hat}) with $(i,k)=(2,0),(2,1),(2,2),(1,2),(0,2)$ determine nodes in blue in terms of $\hat{c}_{0,0}$, $\hat{c}_{1,0}$, $\hat{c}_{0,1}$, $\hat{c}_{1,1}$, $\hat{c}_{2,0}$, $\hat{c}_{2,1}$, $\hat{c}_{2,2}$, $\hat{c}_{1,2}$ and $\hat{c}_{0,2}$.
The equations (\ref{c-hat}) with $(i,k)=(0,3)$ and $(i,k)=(1,3)$ successively determine $c_{1,4}$ and $c_{2,4}$ (i.e. the two green nodes on the left) in terms of the same $\hat{c}_{i,k}$'s and $\hat{c}_{0,3}$, $\hat{c}_{1,3}$. Evaluating equation (\ref{c-hat}) for $(i,k)=(2,3)$ gives as a linear combination of $c_{3,3}$ and $c_{3,4}$ in terms of lower order terms. In other words, we have one degree of freedom here. The equations (\ref{c-hat}) for $(i,k)=(0,4), (1,4),(2,4)$ successively determine the nodes in violet. Clearly, the one degree of freedom stems from the fact that 
we have a constant term in $f_{m,n}$ which of course vanishes upon applying $\tau$.

\smallskip

\begin{figure}\label{fig1}
\begin{center}
  \begin{tikzpicture}
    \foreach \i in {0,...,5}
      \path[black] (-1,\i) node{\i};
      \foreach \j in {0,...,3}
      \path[black] (\j,-1) node{\j};
    \foreach \i in {0,...,3}
      \foreach \j in {0,...,5}{
        \draw (\i,\j) circle(3pt);
          \fill[red] (1,2) circle(2pt);
              \fill[yellow] (1,0) circle(2pt);
                  \fill[red] (2,1) circle(2pt);
                      \fill[yellow] (2,0) circle(2pt);
                          \fill[yellow] (0,2) circle(2pt);
                              \fill[yellow] (0,1) circle(2pt);
 \fill[yellow] (0,0) circle(2pt);
 \fill[red] (1,1) circle(2pt);
 \fill[blue] (1,3) circle(2pt);
\fill[blue] (2,3) circle(2pt);
\fill[blue] (3,2) circle(2pt);
\fill[blue] (3,1) circle(2pt);
 \fill[blue] (2,2) circle(2pt);
 \fill[green] (1,4) circle(2pt);
\fill[green] (2,4) circle(2pt);
\fill[green] (3,4) circle(2pt);
 \fill[green] (3,3) circle(2pt);
    \fill[violet] (1,5) circle(2pt);
 \fill[violet] (2,5) circle(2pt);
 \fill[violet] (3,5) circle(2pt);
 \ifnum \i = 0
          \ifnum \j > 0
 \fill[yellow] (0,\j) circle(2pt);
          \fi
        \fi
 \ifnum \i >0
          \ifnum \j = 0
 \fill[yellow] (\i,0) circle(2pt);
          \fi
        \fi
      };
  \end{tikzpicture}
\end{center}
\caption{$\lfloor m/2\rfloor=3$ and $\lfloor n/2\rfloor=5$.}
\end{figure}
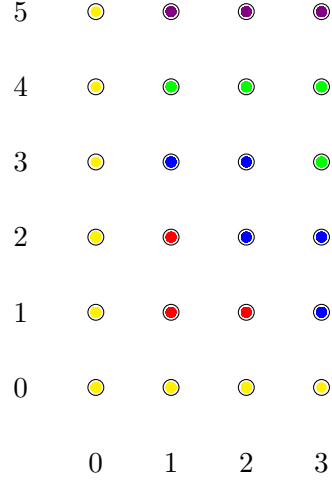

\smallskip

The preceding example indicates how to treat the general case which we consider from now on.
Let $s\in\N$ with $0\leq s\leq \lfloor m/2\rfloor-1$ be given.
We claim that the $2s+1$ equations (\ref{c-hat}) with $$(i,k)=(s,0),(s,1),\dots, (s,s), (s-1,s),\dots, (1,s),(0,s)$$
determine
$c_{i,k}$ with $$(i,k)=(s+1,1),\dots, (s+1,s), (s,s), (s,s+1),(s-1,s+1), \dots, (1,s+1)$$
in terms of the $\hat{c}_{i,k}$ with $i,k\leq s$.\\
We prove the claim by induction on $s$. In order to settle the initial induction step, consider equation (\ref{c-hat}) for $s=0$, i.e. $i=k=0$. This is equivalent to 
\begin{equation}\label{c00}
\hat{c}_{0,0}=n(n-1)c_{1,0}+m(m-1)c_{0,1}=-m(m-1)n(n-1)/2\cdot c_{0,0}.
\end{equation}
Thus $c_{0,0}$ is determined by $\hat{c}_{0,0}.$
Suppose now that we have shown the claim for some $0\leq s_0\leq \lfloor m/2\rfloor-1$ 
and further assume that $s_0+1\leq \lfloor m/2\rfloor-1$.
Evaluating (\ref{c-hat}) for $(i,k)=(0,s_0+1)$ gives $c_{1,s_0+2}$ in terms of $c_{0,s_0+2}$, $c_{1,s_0+1}$ and $\hat{c}_{0,s_0+1}$. By induction assumption $c_{1,s_0+1}$ can be expressed in terms of lower order $\hat{c}_{i,k}$'s. Due to (\ref{c00}) and (\ref{koe-2}) this also applies to $c_{0,s_0+2}$. Analogously, we can successively determine $c_{i,s_0+2}$ with $1\leq i\leq s_0$
in terms of lower order $\hat{c}_{i,k}$'s.
Similarly, we can determine $c_{s_0+2,k}$ with $1\leq k\leq s_0$
in terms of lower order $\hat{c}_{i,k}$'s.
It remains to show that upon 
evaluating (\ref{c-hat}) for $(i,k)=(s_0,s_0+1), (s_0+1,s_0+1)$ and $(s_0+1,s_0)$ yields equations
for $c_{s_0+1,s_0+2}$, $c_{s_0+1,s_0+1}$ and $c_{s_0+2,s_0+1}$.
Indeed, we have 
\begin{equation}
\begin{pmatrix} 
\hat{c}_{s_0,s_0+1}\\ 
\hat{c}_{s_0+1,s_0+1}\\
\hat{c}_{s_0+1,s_0}
\end{pmatrix}=A\cdot\begin{pmatrix} 
{c}_{s_0+1,s_0+1}\\ 
{c}_{s_0+1,s_0+2}\\
{c}_{s_0+2,s_0+1}
\end{pmatrix}+\mbox{lower order terms}
\end{equation}
with
\begin{equation*}
A=\begin{pmatrix} 
x&4&0\\ 
0&y&x\\
y&0&4
\end{pmatrix},
\end{equation*}
 $x=(n-2-2s_0)(n-3-2s_0)$ and $y=(m-2-2s_0)(m-3-2s_0)$.
The matrix $A$ is invertible since $\det A=8xy$ and $s_0\leq \lfloor m/2\rfloor-2\leq \lfloor n/2\rfloor-2$ by assumption.
\smallskip

The preceding considerations prove that we can express all $c_{i,k}$ with $i,k\leq  \lfloor m/2\rfloor$ in terms of lower order $\hat{c}_{i,k}$'s, where we have one degree of freedom.

\smallskip

We now evaluate the equation (\ref{c-hat}) successively for 
\begin{equation*}
\begin{aligned}
(i,k)=&(0,\lfloor m/2\rfloor), \dots, (\lfloor m/2\rfloor-1,\lfloor m/2\rfloor),\\& (0, \lfloor m/2\rfloor+1),\dots, (\lfloor m/2\rfloor-1,\lfloor m/2\rfloor+1),\\&\hspace{3cm}\dots\hspace{3cm},\\&(0,\lfloor n/2\rfloor-1),\dots, (\lfloor m/2\rfloor-1,\lfloor n/2\rfloor-1)
\end{aligned}
\end{equation*}
and thus determine the remaining $c_{i,k}$ in terms of $\hat{c}_{i,k}$'s.
This settles the second step which allows us to apply the induction assumption to (\ref{delta-fn}) and thus to establish the claim.
\end{proof}

\section{The model geometry $\Nil$}\label{section-nil}

The aim of this section is to provide explicit proper $r$-harmonic functions on the Thurston geometry $\Nil$ for any $r\in\N^+$. In the literature this problem has so far only been considered in \cite{Gud-14}, in the case of $r=2$.
\vskip .2cm

$\Nil$ is a $3$-dimensional Riemannian homogeneous space modelled on the classical nilpotent Heisenberg group.  It can be presented as the Lie subgroup
$$
\Nil=\{
\begin{bmatrix}
1 & x & t\\
0 & 1 & y\\
0 & 0 & 1
\end{bmatrix}|\ x,y,t\in\rn\}
$$
of the special linear group $\SLR 3$ equipped with its standard left-invariant Riemannian metric.
The restriction of this metric to $\Nil$ is completely determined by the orthonormal
basis $\{X,Y,T\}$ of its Lie algebra, which is given by
$$
X=
\begin{bmatrix}
0 & 1 & 0\\
0 & 0 & 0\\
0 & 0 & 0
\end{bmatrix},\ \
Y=
\begin{bmatrix}
0 & 0 & 0\\
0 & 0 & 1\\
0 & 0 & 0
\end{bmatrix},\ \
T=
\begin{bmatrix}
0 & 0 & 1\\
0 & 0 & 0\\
0 & 0 & 0
\end{bmatrix}.
$$

In the global coordinates $(x,y,t)$ on $\Nil$ the left-invariant Riemannian metric satisfies
$$
ds^2=dx^2+dy^2+(dt-xdy)^2.
$$

A straightforward calculation shows that the associated Laplace-Beltrami operator $\tau$ on $\Nil$ is given by
\begin{equation}\label{tension-Nil}
\tau(f)=(\frac {\partial ^2 f}{\partial x^2}+\frac {\partial ^2 f}{\partial y^2})
+2x\,\frac {\partial ^2 f}{\partial y\partial t}+(1+x^2)\,\frac {\partial ^2 f}{\partial t^2}.
\end{equation}
Furthermore, the conformality operator $\kappa$ satisfies
\begin{equation}\label{kappa-Nil}
\kappa(f,h)=\frac{\partial f}{\partial x}\frac{\partial h}{\partial x}+\frac{\partial f}{\partial y}\frac{\partial h}{\partial y}+x\,(\frac{\partial f}{\partial y}\frac{\partial h}{\partial t}+\frac{\partial f}{\partial t}\frac{\partial h}{\partial y})+(1+x^2)\,\frac{\partial f}{\partial t}\frac{\partial h}{\partial t}.
\end{equation}

\medskip

In his paper \cite{Gud-14}, Gudmundsson constructed the first examples of biharmonic functions on $\Nil$. He shows that for a non-zero element $b\in\cn^{12}$ the function $B:\Nil\to\cn$ given by
\begin{eqnarray*}
B(x,y,t)&=&b_1x^2+b_2y^2+b_3yt+b_4x^3+b_5x^2y+b_6x^2t+b_7xy^2\\
& &\quad+b_8y^3+b_9x^3y+b_{10}xy^3+b_{11}y^2t+b_{12}x^3t
\end{eqnarray*}
yields a complex 12-dimensional family of proper biharmonic functions on the model space $\Nil$.

\smallskip

Below we construct proper $r$-harmonic functions on $\Nil$ for any $r\in\N^+$. In the following example we first consider products of proper harmonic maps of $(x,y)$ in the Euclidean $\rn^2$ and monomials in $t\in\rn$.

\begin{example}
Let the function $H:\R^2\rightarrow\R$ be defined by 
$$H(x,y)=\frac 12\cdot (e^{(x+iy)}+e^{(x-iy)}).$$
Then $H$ is a non-vanishing sum of a holomorphic function and a anti-holomorphic one.  This means that it is proper harmonic and clearly satisfies 
$$H(x,y)=e^x\cdot\cos y.$$
For small natural numbers $d\in\N$, straightforward calculations verify that the function $f_d:\Nil\rightarrow\C$ given by $$f_d(x,y,t)=e^x\cdot \cos(y)\cdot t^d$$ is proper $(2d+1)$-harmonic.
\end{example}

In what follows we will provide Theorem \ref{main-2}.
For this purpose we first establish our preparatory Lemma \ref{prep} in which we determine the degree of the harmonicity for products of harmonic functions in $(x,y)$ in the Euclidean $\rn^2$ and monomials in $x$.
Such a result is most likely known, however, we provide here a proof for the reader's convenience.

\begin{lemma}\label{prep}
Let $H:\R^2\rightarrow\cn$ be a proper harmonic function from the Euclidean $\rn^2$ and $d\in\N$. Further assume that $\partial_x^kH(x,y)\neq0$ for all $k\in\N$.  Then the product $$f_{d}(x,y)=H(x,y)\cdot x^d$$
is proper $(d+1)$-harmonic.
\end{lemma}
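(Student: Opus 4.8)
The plan is to prove this by induction on $d$, where the case $d=0$ is simply the hypothesis that $H$ is proper harmonic on $\R^2$. For the inductive step, the key observation is that on the Euclidean $\R^2$ the Laplace--Beltrami operator is $\tau = \partial_x^2 + \partial_y^2$, the conformality operator is $\kappa(f,h) = \partial_x f\,\partial_x h + \partial_y f\,\partial_y h$, and the product rule $\tau(f\cdot h) = \tau(f)\cdot h + 2\kappa(f,h) + f\cdot\tau(h)$ applies. Taking $f = x^d$ and $h = H$, we get $\tau(x^d H) = d(d-1)x^{d-2}H + 2d\,x^{d-1}\partial_x H + 0$, since $H$ is harmonic. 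Thus
\begin{equation*}
\tau(x^d H) = d(d-1)\,x^{d-2} H + 2d\,x^{d-1}(\partial_x H).
\end{equation*}

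Now I would observe that $\partial_x H$ is again harmonic (differentiation commutes with $\tau$ on $\R^2$) and, by the hypothesis $\partial_x^k H \neq 0$ for all $k$, it is again proper harmonic with all its $x$-derivatives nonvanishing; similarly $H$ itself. So the right-hand side is a linear combination of $x^{d-2}\cdot(\text{proper harmonic})$ and $x^{d-1}\cdot(\text{proper harmonic})$. By the induction hypothesis (applied to $d-2$ and to $d-1$, with the two distinct harmonic functions $H$ and $\partial_x H$), the first term is $(d-1)$-harmonic and the second is $d$-harmonic; hence $\tau(x^d H)$ is $d$-harmonic, so $x^d H$ is $(d+1)$-harmonic. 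For $d=1$ the first term is absent and $\tau(xH) = 2\partial_x H$ is proper harmonic, giving $2$-harmonicity directly; this anchors the induction alongside $d=0$.

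It remains to verify \emph{properness}, i.e.\ that $\tau^{d}(x^d H)$ does not vanish identically. The cleanest way is to track the leading behavior: I claim $\tau^{d}(x^d H)$ is a nonzero constant multiple of $\partial_x^{2d} H$ (or more precisely, $\tau^{d}(x^d H) = c_d\,\partial_x^d H$ for an explicit nonzero constant $c_d$, obtained by an elementary induction on the formula above — at each application of $\tau$ the top surviving term is the one where all derivatives land on $H$, contributing a factor $2d, 2(d-1), \dots$). Since $\partial_x^d H \neq 0$ by hypothesis, properness follows. The main obstacle — and the only place real care is needed — is precisely this properness bookkeeping: one must check that the constants $c_d$ produced by the recursion never vanish and that no cancellation occurs between the two terms on the right-hand side of the displayed identity when iterating $\tau$. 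Writing $f_d = H\cdot x^d$ and setting $g_k = \partial_x^k H$, a short induction shows $\tau^{j}(f_d)$ is a $\mathbb{Z}$-linear combination $\sum_k a_{j,k}\, x^{?}\, g_k$ with leading coefficient $a_{j,d} = \frac{d!}{(d-j)!}\cdot 2^j \cdot (\text{something})$ strictly positive, which suffices.
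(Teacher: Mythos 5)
Your proof is correct and follows essentially the same route as the paper: induction on $d$ using $\tau(x^dH)=d(d-1)x^{d-2}H+2d\,x^{d-1}\partial_xH$ together with the fact that $\partial_xH$ again satisfies the hypotheses of the lemma. Your explicit properness bookkeeping (in effect $\tau^{d}(x^dH)=2^{d}d!\,\partial_x^{d}H$; note the stray exponent $2d$ in one place) is a slightly more careful version of the paper's observation that the sum of a proper $(d+1)$-harmonic function and a $d$-harmonic function is automatically proper $(d+1)$-harmonic, so no cancellation check is actually needed.
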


\begin{proof}
We prove the claim by induction on the degree $d$. By assumption $f_0=H$ which is proper harmonic.  This settles the initial step. 

For the general induction step let us assume that $H:\R^2\rightarrow\R$ is a proper harmonic map with $$\partial_x^kH(x,y)\neq 0$$ for all $k\in\N$. Furthermore, that we have proven the claim for all $d\leq d_0$, for some $d_0\in\N^+$.
From equation (\ref{tension-Nil}) we have
\begin{eqnarray*}\label{der}
& &\tau(f_{d_0+1}(x,y))\\
&=&\tau(H(x,y))+2(\partial_xH)(x,y)(d_0+1)x^{d_0}+H(x,y)d_0(d_0+1)x^{d_0-1}\\
&=&2(d_0+1)(\partial_xH)(x,y)x^{d_0}+d_0(d_0+1)H(x,y)x^{d_0-1}.
\end{eqnarray*}
Since $\partial_x^kH(x,y)\neq0$ for all $k\in\N$, we obviously also have
 $\partial_x^k(\partial_xH)(x,y)\neq0$ for all $k\in\N$. Hence,
by the induction assumption, the first and the second term on the right hand side of (\ref{der}) are proper $(d_0+1)$-harmonic and proper $d_0$-harmonic, respectively.
Thus the function $f_{d_0+1}$ is proper $(d_0+2)$-harmonic, which establishes the result.
\end{proof}

With this preparation at hand we are now ready to prove Theorem \ref{main-2}.

\begin{proof}[Proof of Theorem \ref{main-2}]
We prove the claim by induction on the exponent $\alpha$. Lemma \ref{prep} provides the claim for $\alpha=0$ and thus settles
the initial induction step.

Let $\alpha_0\in\N^+$ be given. Below we will call the terms which factor
$t^{\alpha_0+1}$ of \textit{Type I}. All terms which factor powers of $t$ which are strictly smaller than $\alpha_0+1$ are said to be of \textit{Type II}. Assume now that the claim has been proven for all $\alpha\leq \alpha_0$.
From (\ref{tension-Nil}) we have
\begin{eqnarray*}\label{delf}
& &\tau(f_{d,\alpha_0+1}(x,y,t))\\
&=&\tau(H(x,y))\cdot x^d)\cdot t^{\alpha_0+1}+2x(\alpha_0+1)\partial_y H(x,y)x^dt^{\alpha_0}\\
& &\qquad +\alpha_0(\alpha_0+1)(1+x^2)H(x,y)\cdot x^d\cdot t^{\alpha_0-1}.
\end{eqnarray*}
Here the first term $\tau(H(x,y))\cdot x^d)\cdot t^{\alpha_0+1}$ is of \textit{Type I} and the rest of \textit{Type II} and hence we can ignore the latter by the induction assumption. This means that we only have to determine the degree of harmonicity of the first term $$R_1(x,y,t)=\tau(H(x,y)\cdot x^d)\cdot t^{\alpha_0+1}.$$
Since $R_1$ is of Type II we can not yet apply the induction assumption for this.  Instead we first have to apply $\tau$ again to $R$.
From (\ref{tension-Nil}) we have
\begin{eqnarray*}
& &\tau(R(x,y,t))\\
&=&\tau^2(H(x,y)\cdot x^d)\cdot t^{\alpha_0+1}
+2x(\alpha_0+1)\partial_y(\tau( H(x,y)\cdot x^d))t^{\alpha_0}\\
& &\quad +(\alpha_0+1)\alpha_0(1+x^2)\tau(H(x,y)\cdot x^d)\cdot t^{\alpha_0-1}.
\end{eqnarray*}
Again using again the induction assumption, one easily proves that the function 
$$2(\alpha_0+1)\partial_y(\tau( H(x,y)\cdot x^d))t^{\alpha_0}+(\alpha_0+1)\alpha_0(1+x^2)\tau(H(x,y)\cdot x^d)\cdot t^{\alpha_0-1}$$
is $(2\alpha_0+d)$-harmonic.
 Hence we only have to determine the degree of harmonicity of the remaining term $R_2(x,y,t)=\tau^2(H(x,y)\cdot x^d)\cdot t^{\alpha_0+1}.$
We now want to apply the same argumentation as above to show that we can
 reduce the problem to determining the degree of harmonicity of a remaining term of the form $R_3(x,y,t)=\tau^3(H_1(x,y)\cdot x^d)\cdot t^{\alpha_0+1}$. This reduction step is now proven in full generality. For this purpose suppose that we have a remaining term 
 $$R_i(x,y,t)=\tau^i(H_1(x,y)\cdot x^d)\cdot t^{\alpha_0+1}$$
of Type II, for some $i\in\N^+$.
Applying the Laplace-Beltrami operator $\tau$ to $R_i$ yields
\begin{eqnarray*}
& &\tau(R_i(x,y,t))\\
&=&\tau^{i+1}(H_1(x,y)\cdot x^d)\cdot t^{\alpha_0+1}+2(\alpha_0+1)x\partial_y(\tau^{i}(H_1(x,y)\cdot x^d))t^{\alpha_0}\\
& &\quad +\alpha_0(\alpha_0+1)(1+x^2)\tau^{i}(H_1(x,y)\cdot x^d)t^{\alpha_0-1}.
\end{eqnarray*}
In order to apply the induction assumption we will use that for any $n\in\N$ we have
\begin{equation}\label{deln}
\tau^n(H_1(x,y)\cdot x^d)=\sum_{j=0}^nc_j(d)(\partial_x^jH_1)(x,y)x^{d-2n+j},
\end{equation}
where $c_j(d)$, $j\in\{0,\dots,n\}$, is some positive constant depending on $d$. This can be proven easily by induction on $n$.

\smallskip

Consequently, by induction assumption and equation (\ref{deln}) we have that 
$$x\partial_y(\tau(H_1(x,y)\cdot x^d)^{i})t^{\alpha_0}$$
is $(d-i+2+2\alpha_0)$-harmonic. Since we applied $\tau$ $(i+1)$-times to $f_{d,\alpha_0+1}(x,y,t)$, we have that $f_{d,\alpha_0+1}(x,y,t)$ is at least $(2(\alpha_0+1)+d+1)$-harmonic.
Similarly, by induction assumption and equation (\ref{deln}) we have that 
$$(1+x^2)\tau(H_1(x,y)\cdot x^d)^{i}t^{\alpha_0-1}$$
is $(d-i+3+2(\alpha_0-1))$-harmonic. Since we applied $\tau$ $(i+1)$-times
to $f_{d,\alpha_0+1}(x,y,t)$, we have that $f_{d,\alpha_0+1}(x,y,t)$ is at least $(2(\alpha_0+1)+d)$-harmonic.

\smallskip

The preceding considerations and Lemma\,\ref{prep} imply that after applying $\tau$ $(d+1)$-times to $f_{d,\alpha_0+1}$ we are left with terms of Type II.
Hence, the argument of the preceding paragraph yields that $f_{d,\alpha_0+1}(x,y,t)$ is $(2(\alpha_0+1)+d+1)$-harmonic. This establishes the claim.
\end{proof}

We now turn to Theorem\,(\ref{main-3}). Again we start with providing examples.

\begin{example}
\begin{enumerate}
\item The function $f:\Nil\rightarrow\C$ given by $$f_{1,3,7}(x,y,t)=xy^3t^7,$$ is proper $10$-harmonic.
\item The function $f:\Nil\rightarrow\C$ given by $$f_{5,2,4}(x,y,t)=x^5y^2t^4,$$ is proper $8$-harmonic.
\end{enumerate}
\end{example}

Below we prove Theorem\,(\ref{main-3}).

\begin{proof}[Proof of Theorem\,(\ref{main-3}).]
Throughout the proof we make use of the short hand notations
$$r_e(m,n,\alpha)=\lfloor (m+\alpha)/2\rfloor+\lfloor n/2\rfloor+1+\lfloor \alpha/2\rfloor$$
and
$$r_o(m,n,\alpha)=\lfloor (m+\alpha)/2\rfloor+\lfloor (n-1)/2\rfloor+2+\lfloor \alpha/2\rfloor.$$
Thus, the goal is to prove that the function $x^my^nt^{\alpha}$ is $r_e(m,n,\alpha)$-harmonic if $\alpha$ is even and $r_o(m,n,\alpha)$-harmonic if $\alpha$ is odd.

\smallskip

The strategy is to prove the claim by induction on $\alpha$.
In order to settle the induction beginning, we provide (1) for $\alpha=0$ and (2) for $\alpha=1$.\\
By induction one proves easily that
for any $m,n\in\N$, the function $f_{m,n,0}:\R^2\rightarrow\R$ given by
$$f_{m,n,0}(x,y)=x^my^{n}$$ is proper $(\lfloor m/2\rfloor+\lfloor n/2\rfloor+1)$-harmonic.
This settles the claim for $\alpha=0$.\\
From (\ref{tension-Nil}) we get
\begin{equation}\label{a1}
\tau(x^my^{n}\cdot t)=m(m-1)x^{m-2}y^nt+n(n-1)x^my^{n-2}t+2x^{m+1}y^{n-1}.
\end{equation}
Let us first ignore the first two terms on the right hand side of (\ref{a1}).
The third term on the right hand side of (\ref{a1}) is 
$(r_o(m,n,1)-1)$-harmonic. 
Thus, $x^my^{n}\cdot t$ is $r_o(m,n,1)$-harmonic, as claimed.
Hence this establishes the claim for $\alpha=1$, since one shows easily that the first two terms on the right hand side of (\ref{a1}) are at most $(r_o(m,n,1)-1)$-harmonic.

\smallskip

Assume next that we have proven the claim for all $m, n\in\N$ and any $\alpha\leq\alpha_0$
for some $\alpha_0\in\N$. 
In what follows we will assume that $\alpha_0$ is even (since the considerations for $\alpha_0$ odd are completely analogous they are omitted). 
By (\ref{tension-Nil}) we get
\begin{equation}\label{an}
\begin{aligned}
&\tau(x^my^{n}\cdot t^{\alpha_0+1})\\&=m(m-1)x^{m-2}y^n\cdot t^{\alpha_0+1}+n(n-1)x^my^{n-2}\cdot t^{\alpha_0+1}\\
&+2n(\alpha_0+1)x^{m+1}y^{n-1}\cdot t^{\alpha_0}+(\alpha_0+1)\alpha_0(1+x^2)x^my^n\cdot t^{\alpha_0-1}.
\end{aligned}
\end{equation}
By induction assumption the third term on the right hand side of (\ref{an}) is
$r_e(m+1,n-1,\alpha_0)$-harmonic.
Further, the fourth term is $r_o(m+2,n,\alpha_0-1)$-harmonic. Since $\alpha_0$ is even by assumption, their degree of harmonicity agrees. Hence, $x^my^{n}\cdot t^{\alpha_0+1}$ is at least $r_o(m,n,\alpha_0+1)$-harmonic.
It remains to deal with the first and second term of the right hand side of (\ref{an}).
We will restrict ourselves to the first one since the considerations for the second one are analogous.
Applying $\tau$ to $x^{m-2}y^n\cdot t^{\alpha_0+1}$
yields the right hand side of (\ref{an}) with $m$ substituted by $m-2$.
Hence, the third and the fourth term of this right hand side are 
 $r_o(m,n,\alpha_0-1)$-harmonic. 
Hence, $x^my^{n}\cdot t^{\alpha_0+1}$ is again at least $r_o(m,n,\alpha_0+1)$-harmonic.
The remaining two terms can be dealt with inductively. After $\lfloor (m-2)/2\rfloor+\lfloor n/2\rfloor+1$ steps there are only terms of left with exponents of $t$ strictly less than $\alpha_0+1$.
We can thus apply the induction assumption and establish the induction step.
\end{proof}

\begin{remark}
Let $H_p:\R^2\rightarrow\R$ denote a proper $p$-harmonic function.
In view of the two preceding theorems it is natural to ask whether 
functions of the form $$g_{p,\alpha}(x,y,t)=H_p(x,y)\cdot t^{\alpha}$$ are also proper $r$-harmonic for some $r\in\N^+$.
It is easy to prove that they are indeed $r$-harmonic for some $r\in\N^+$ and to give an upper bound on $r$. However, determining the exact value of $r$ is mainly a laborious task which does not give much additional structural insight. 
\end{remark}

\section{The model geometry $\SL2$}\label{section-sl2}

The model space $\SL2$ on Thurston's list is diffeomorphic to the universal cover of the 3-dimensional
Lie group $\SLR 2$ of $2\times 2$ real traceless matrices.  It is well-known that
$\SL2$ can, as a Riemannian manifold, be modeled as $\rn^3$ equipped with the following metric
$$
ds^2=\frac 1{y^2}(dx^2+dy^2)+(dt+\frac{dx}y)^2.
$$
For this fact we refer to \cite{BW-book}.  This metric is different from the
one obtained by lifting the standard metric of $\SLR 2$ to its
universal cover.  It is also clear that it is not a product metric induced by metrics on
$\rn^2$ and $\rn$, respectively.
The Laplace-Beltrami operator on $\SL2$ with the above metric $ds^2$ satisfies
\begin{equation}\label{tension-SL2}
\tau(f)=y^2(\frac {\partial ^2 f}{\partial x^2}+\frac {\partial ^2 f}{\partial y^2})
+2\frac {\partial ^2 f}{\partial t^2}-2y\frac {\partial ^2 f}{\partial x\partial t}.
\end{equation}

In his paper \cite{Gud-14}, Gudmundsson constructed a family of globally defined complex-valued proper biharmonic functions
on the model space $\SL2$.

\begin{example}\label{exam:SL2-1}\cite{Gud-14}
For any non-zero $b\in\cn^6$ the complex-valued function $f_2:\SL2\to\cn$ given by
$$f_2(x,y,t)=b_1xt+b_2t^2+b_3xt^2+b_4yt^2+b_5t^3+b_6yt^3$$
is proper biharmonic on $\SL2$.
\end{example}

From (\ref{tension-SL2}) it is obvious that any polynomial $p_d:\R\rightarrow\C$  in $t$ of degree $d$
is a proper $\lfloor d/2\rfloor +1$-harmonic function on $\SL2$.
This fact and the preceding example can easily be generalized to generate $r$-harmonic maps for any $r\in\N^+$.

\begin{lemma}
Let $p_d:\R\rightarrow\C$ be a polynomial of degree $d$.
Then \begin{enumerate}
\item $f:\SL2\rightarrow\C$ given by
 $$f(x,y,t)=p_d(t)\cdot y$$ is a proper $r$-harmonic function on $\SL2$, where $r=\lfloor d/2\rfloor +1$;
 \item
 $g:\SL2\rightarrow\C$ given by
 $$g(x,y,t)=p_d(t)\cdot x$$ is a proper $r$-harmonic function on $\SL2$, where $r=\lceil d/2\rceil +1$.
 \end{enumerate}
\end{lemma}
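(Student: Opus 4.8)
The plan is to compute $\tau$ applied to each of the two functions directly from the explicit formula \eqref{tension-SL2} and to track how the degree in $t$ drops under iteration, exactly as in the remark preceding the lemma. For part (1), I would write $f(x,y,t) = p_d(t)\cdot y$ and observe that since $f$ is independent of $x$, formula \eqref{tension-SL2} gives $\tau(f) = y^2\,\partial_y^2(p_d(t)y) + 2\,\partial_t^2(p_d(t)y) - 2y\,\partial_x\partial_t(p_d(t)y) = 2 p_d''(t)\cdot y$, because $\partial_y^2(p_d(t)y)=0$ and the mixed $x$-$t$ derivative vanishes. Thus $\tau(f) = 2\,p_d''(t)\cdot y$, which is again of the same form with $p_d$ replaced by the polynomial $p_d''$ of degree $d-2$. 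Iterating, $\tau^k(f) = 2^k p_d^{(2k)}(t)\cdot y$, and this vanishes exactly when $2k > d$, i.e.\ when $k = \lfloor d/2\rfloor + 1$, while $\tau^{\lfloor d/2\rfloor}(f) = 2^{\lfloor d/2\rfloor} p_d^{(2\lfloor d/2\rfloor)}(t)\cdot y$ is a nonzero constant multiple of $y$ (or a nonzero linear polynomial in $t$ times $y$), hence does not vanish identically. This proves $f$ is proper $r$-harmonic with $r = \lfloor d/2\rfloor + 1$.

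For part (2), set $g(x,y,t)=p_d(t)\cdot x$. Now $g$ is independent of $y$, so \eqref{tension-SL2} gives $\tau(g) = y^2\,\partial_x^2(p_d(t)x) + 2\,\partial_t^2(p_d(t)x) - 2y\,\partial_x\partial_t(p_d(t)x) = 2 p_d''(t)\cdot x - 2y\,p_d'(t)$. The new feature compared to part (1) is the extra term $-2y\,p_d'(t)$, which is of the form handled by part (1) with the polynomial $p_d'$ of degree $d-1$. By part (1) this term is proper $(\lfloor (d-1)/2\rfloor + 1)$-harmonic. The term $2 p_d''(t)\cdot x$ is of the same shape as $g$ with $p_d$ replaced by $p_d''$ of degree $d-2$, so an easy induction on $d$ shows $g$ is $r$-harmonic with $r = \lceil d/2\rceil + 1$: indeed $\lceil d/2\rceil + 1 = \lceil (d-2)/2\rceil + 2$ covers the first term by the inductive hypothesis (applied after one more $\tau$), and $\lfloor (d-1)/2\rfloor + 1 \leq \lceil d/2\rceil$ covers the second term. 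For the base cases $d=0,1$ one checks $\tau(x)=0$ and $\tau(tx) = -2y$, $\tau^2(tx)=0$ directly.

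The only point requiring a little care is the \emph{properness} claim in part (2): one must verify that $\tau^{\lceil d/2\rceil}(g)$ does not vanish identically, rather than merely that $\tau^{\lceil d/2\rceil + 1}(g) = 0$. I expect this to be the main (though still routine) obstacle, since the two contributions $2p_d''(t)x$ and $-2yp_d'(t)$ evolve at slightly different rates and one must check they do not cancel — but they cannot, as the first is always a (polynomial in $t$) multiple of $x$ and the second a multiple of $y$, so they live in linearly independent function spaces and no cancellation is possible. Tracking which of the two survives longest under iteration of $\tau$ then pins down the exact order $\lceil d/2\rceil + 1$ and simultaneously confirms properness. A clean way to organize this is to prove by induction on $d$ the sharper statement that $\tau^k(g)$ equals a nonzero constant times $p_d^{(2k)}(t)\,x$ plus a term of the form (polynomial of degree $d-2k+1$ in $t$) times $y$, which makes both the vanishing order and nonvanishing transparent.
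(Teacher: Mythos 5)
Your proposal is correct and follows essentially the same route as the paper: compute $\tau(f)=2p_d''(t)\,y$ and $\tau(g)=2p_d''(t)\,x-2p_d'(t)\,y$, reduce part (2) to part (1) plus an induction on the degree, and rule out cancellation between the $x$- and $y$-multiples. One small caution: the summand $2p_d''(t)x$ does generate further $y$-terms under iteration of $\tau$ (since $\tau(a(t)x)=2a''(t)x-2a'(t)y$), so your mid-proof claim that the two contributions stay forever in ``linearly independent function spaces'' branch-by-branch is not literally accurate; however, the sharper inductive statement you propose at the end, namely $\tau^k(g)=c_k\,p_d^{(2k)}(t)\,x+(\text{nonzero polynomial of degree exactly } d-2k+1)\,y$, repairs this and in fact pins down properness (especially for odd $d$, where the last surviving term is the $y$-multiple) more carefully than the paper's own terse non-cancellation remark.
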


\begin{proof}
Since $f$ is linear in $y$ and does not depend on the variable $x$ we get
$$\tau(f)=2\cdot\frac {\partial ^2 f}{\partial t^2}.$$
This implies $(1)$.

\smallskip

To prove $(2)$ we proceed by induction. Clearly, $P_0(t)x=cx$ is proper harmonic.
Assume that $(2)$ holds true for any integer $d$ less than some given integer $d_0$.
From (\ref{tension-SL2}) we obtain
\begin{equation}
\tau (g)=2x\frac {\partial ^2 P_{d_0}}{\partial t^2}-2y\frac {\partial P_{d_0}}{\partial t}.
\end{equation}
The first summand is of the form $P_{d_0-2}(t)x$. 
By induction, this implies that this summand is proper $\lceil d_0/2\rceil$-harmonic.
The second summand is of the form considered in $(1)$ and is thus a proper $\lfloor (d_0-1)/2\rfloor +1$-harmonic function. Note $\lfloor (d_0-1)/2\rfloor +1=\lceil d_0/2\rceil$.
When applying the operator $\tau$ $n$-times to $g$, the different resulting summands can not cancel out since there needs to be exactly one summand linear in $x$ (as long as $n<r$ of course).
Thus, since $\tau$ is linear, this establishes the claim.
\end{proof}

\section{The product spaces $\H^2\times\rn$ and $\s^2\times\rn$}\label{section-product}
In this section we provide proper $r$-harmonic maps on the product spaces $\H^2\times\rn$ and $\s^2\times\rn$ on Thurston's list.

\subsection{The product space $\H^2\times\rn$}
Let $\H^2$ be the hyperbolic disc endowed with its standard Riemannian metric of constant curvature $-1$.  Further, we equip the product space $\H^2\times\rn$ with its product metric.  It is straightforward to prove, see \cite{Gud-14}, that in the standard global coordinates $(z,t)$ on $\H^2\times\rn$, the operators $\tau$ and $\kappa$ are then given by
\begin{equation*}\label{tension-H2-R}
\tau(f)=4(1-z\bar z)^2\frac {\partial ^2 f}{\partial z\partial \bar z}+\frac {\partial ^2 f}{\partial t^2}
\end{equation*}
and
\begin{equation*}\label{conformality-H2-R}
\kappa(f,h)=2(1-z\bar z)^2(
\frac{\partial f}{\partial z}\frac{\partial h}{\partial \bar z}
+\frac{\partial f}{\partial \bar z}\frac{\partial h}{\partial z})
+\frac{\partial f}{\partial t}\frac{\partial h}{\partial t}.
\end{equation*}

In the following lemma we generalise Corollary 7.1 of \cite{Gud-14} and provide proper $r$-harmonic functions on the product space $\H^2\times\rn$.

\begin{lemma}\label{h2}
Let the functions $f,g:\H^2\to\cn$ be holomorphic and $P:\rn\to\cn$ be a complex-valued polynomial
$$P(t)=\sum_{k=0}^{2r-1}b_kt^k,$$ such that $(b_{2r-2},b_{2r-1})\neq 0$.  Then the function
$$F(z,t)=(f(z)+g(\bar z))\cdot P(t)$$
is proper $r$-harmonic on the product space $\H^2\times\rn$.
\end{lemma}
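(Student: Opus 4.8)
The plan is to exploit the product structure of the Laplace-Beltrami operator $\tau(F) = 4(1-z\bar z)^2 F_{z\bar z} + F_{tt}$ together with the key observation that $h(z) := f(z) + g(\bar z)$ is \emph{harmonic} on $\H^2$, i.e. $h_{z\bar z} = 0$, since $f$ is holomorphic and $g(\bar z)$ is anti-holomorphic. First I would compute $\tau(F)$ where $F(z,t) = h(z)\cdot P(t)$. Because $h_{z\bar z}=0$, the mixed term involving $(1-z\bar z)^2$ contributes nothing, and more generally $\tau(h\cdot P) = h\cdot P'' + 2\kappa(h,P)$; but $\kappa(h,P)$ also vanishes since $h$ depends only on $z,\bar z$ while $P$ depends only on $t$, so the cross terms in $\kappa$ are zero and $\partial h/\partial t = 0$. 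Hence the clean identity $\tau(F) = h(z)\cdot P''(t)$, and by iteration $\tau^k(F) = h(z)\cdot P^{(2k)}(t)$ for every $k$.

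Once this identity is in hand, the proof reduces to a statement purely about the polynomial $P$. With $P$ of degree exactly $2r-1$ (the hypothesis $(b_{2r-2},b_{2r-1})\neq 0$ guarantees $\deg P \in \{2r-2, 2r-1\}$, hence $\deg P \le 2r-1$ and $\deg P \ge 2r-2$), we get $P^{(2r)} \equiv 0$, so $\tau^r(F) = h\cdot P^{(2r)} = 0$ and $F$ is $r$-harmonic. For properness I must check $\tau^{r-1}(F) = h\cdot P^{(2r-2)}$ does not vanish identically: since $\deg P \ge 2r-2$, the $(2r-2)$-nd derivative $P^{(2r-2)}$ is a nonzero polynomial (a nonzero constant if $\deg P = 2r-2$, or a nonzero degree-one polynomial if $\deg P = 2r-1$, using that at least one of $b_{2r-2}, b_{2r-1}$ is nonzero), and $h = f + g$ is not the zero function — here one should note that for $F$ to be genuinely nontrivial one implicitly assumes $h\not\equiv 0$; assuming this, the product $h\cdot P^{(2r-2)}$ is not identically zero, establishing properness.

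The only step requiring any care is verifying the identity $\tau(h\cdot P) = h\cdot P''$ rigorously, i.e. confirming that both the conformality cross-term and all $\partial_{z\bar z}$ contributions genuinely vanish; this is where one uses harmonicity of $h$ on $\H^2$ in an essential way, but it is a one-line computation from the explicit formulas for $\tau$ and $\kappa$ on $\H^2\times\rn$. I do not anticipate a real obstacle: the difficulty is entirely bookkeeping about which derivative of $P$ is nonzero, governed by the hypothesis on $(b_{2r-2}, b_{2r-1})$. The same scheme — and indeed the same proof verbatim, after replacing the $\H^2$-Laplacian by the $\s^2$-Laplacian, for which $f(z)+g(\bar z)$ is again harmonic — will handle the case $\s^2\times\rn$, which is presumably the content of the companion lemma in this section.
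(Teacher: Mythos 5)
Your proof is correct and follows essentially the same route as the paper: the key identity $\tau(F)=(f(z)+g(\bar z))\cdot P''(t)$, iterated, reduces everything to the degree of $P$. You are also right to flag that properness additionally requires $f+g\not\equiv 0$, a hypothesis the paper leaves implicit.
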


\begin{proof}
Since the functions $f$ and $g$ are holomorphic, we have
$$\tau(F)(z,t)=(f(z)+g(\bar z))\cdot \frac {\partial ^2 P}{\partial t^2}.$$
Thus the claim is established.
\end{proof}

\subsection{The product space $\s^2\times\rn$}
The construction of $r$-harmonic functions on the product space $\s^2\times\rn$ works as the one for $\H^2\times\rn$.  
Due the maximum principle for harmonic functions we need to consider the punctured sphere $\Sigma^2=\s^2\setminus\{p\}$ instead of $\s^2$.  Following \cite{Gud-14},  we model $\Sigma^2$ as the complex plane equipped with its historic Riemannian metric
$$ds^2=\frac 4{(1+(x^2+y^2))^2}(dx^2+dy^2)$$
of constant curvature +1. 

\smallskip

By the following result we provide local proper $r$-harmonic maps on $\s^2\times\rn$.
Since the proof is as elementary as that of Lemma \ref{h2} we have chosen to omit it here.

\begin{lemma}
Let the functions $f,g:\Sigma^2\to\cn$ be holomorphic on the punctured sphere.
Further let $P:\rn\to\cn$ be a complex-valued polynomial
$$P(t)=\sum_{k=0}^{2r-1}b_kt^k,$$ such that $(b_{2r-2},b_{2r-1})\neq 0$.
Then the function
$$
F(z,\bar z,t)=(f(z)+g(\bar z))\cdot P(t)
$$ is proper $r$-harmonic on the product space $\Sigma^2\times\rn$.
\end{lemma}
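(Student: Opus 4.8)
The plan is to run the argument of Lemma~\ref{h2} almost verbatim, the only change being the sign in the curvature term. Modelling $\Sigma^2$ as the complex plane with the metric of curvature $+1$ recorded above and equipping $\Sigma^2\times\rn$ with the product metric, one checks (exactly as in \cite{Gud-14} for $\H^2\times\rn$, with $1-z\bar z$ replaced by $1+z\bar z$) that in the coordinates $(z,\bar z,t)$ the Laplace--Beltrami and the conformality operators are
\begin{equation*}
\tau(f)=4(1+z\bar z)^2\frac{\partial^2 f}{\partial z\partial\bar z}+\frac{\partial^2 f}{\partial t^2},\qquad
\kappa(f,h)=2(1+z\bar z)^2\bigl(\tfrac{\partial f}{\partial z}\tfrac{\partial h}{\partial\bar z}+\tfrac{\partial f}{\partial\bar z}\tfrac{\partial h}{\partial z}\bigr)+\tfrac{\partial f}{\partial t}\tfrac{\partial h}{\partial t}.
\end{equation*}
Thus $\tau$ is the sum of a positive multiple of $\partial^2/\partial z\,\partial\bar z$ and $\partial^2/\partial t^2$, and $\kappa(u,v)$ vanishes whenever $u$ depends only on $(z,\bar z)$ and $v$ only on $t$.

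First I would observe that $\psi(z,\bar z):=f(z)+g(\bar z)$ is harmonic on $\Sigma^2$: since $f$ is holomorphic and $g(\bar z)$ is anti-holomorphic one has $\partial_{\bar z}f=0$ and $\partial_z g(\bar z)=0$, whence $\partial^2\psi/\partial z\,\partial\bar z\equiv 0$, so $\tau$ applied to $\psi$ (viewed as a function on $\Sigma^2\times\rn$) kills the first summand. As $\psi$ does not depend on $t$, the product formula $\tau(\psi\cdot P)=\tau(\psi)\,P+2\kappa(\psi,P)+\psi\,\tau(P)$ together with the two remarks above collapses to $\tau(F)(z,\bar z,t)=\psi(z,\bar z)\cdot P''(t)$. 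Since $P''$ is again a polynomial in $t$ of degree at most $2r-3$, a trivial induction gives $\tau^{k}(F)=\psi\cdot P^{(2k)}$ for all $k\in\N$.

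Because $\deg P\le 2r-1$ we have $P^{(2r)}\equiv 0$, so $\tau^{r}(F)=\psi\cdot P^{(2r)}\equiv 0$ and $F$ is $r$-harmonic. For properness, the hypothesis $(b_{2r-2},b_{2r-1})\ne 0$ ensures that $P^{(2r-2)}(t)=(2r-2)!\,b_{2r-2}+(2r-1)!\,b_{2r-1}\,t$ is not the zero polynomial; and a holomorphic plus an anti-holomorphic function can vanish identically only in the trivial case $f\equiv c$, $g\equiv -c$ (differentiate in $z$ and in $\bar z$), which is excluded, so $\psi\not\equiv 0$ and hence $\tau^{r-1}(F)=\psi\cdot P^{(2r-2)}\not\equiv 0$. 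Finally, exactly as for $\H^2\times\rn$, one must pass to the punctured sphere $\Sigma^2=\s^2\setminus\{p\}$, since the maximum principle forbids non-constant harmonic functions on the compact $\s^2$; this is the sense in which the conclusion is local on $\s^2\times\rn$.

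There is essentially no obstacle here: the whole content is the identity $\tau(\psi\cdot P)=\psi\cdot P''$ for $\psi$ a (local) harmonic function of the surface variables, and the only points deserving a word are the vanishing of the conformality term $\kappa(\psi,P)$ — immediate from the displayed formula for $\kappa$, as $\psi_t=0=P_z=P_{\bar z}$ — and the non-degeneracy used for properness, handled above. This is precisely why, as remarked in the text, the proof can safely be left to the reader.
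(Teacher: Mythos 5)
Your proof is correct and is exactly the argument the paper has in mind: the paper omits the proof precisely because it is the verbatim analogue of Lemma~\ref{h2}, namely $\tau(F)=(f(z)+g(\bar z))\cdot P''(t)$ since the mixed derivative $\partial^2/\partial z\,\partial\bar z$ annihilates a holomorphic plus an anti-holomorphic function. Your additional remarks on properness (non-vanishing of $P^{(2r-2)}$ and of $f+g$) are a welcome bit of extra care that the paper itself leaves implicit.
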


\end{document}